\newtheorem{theor}{Theorem}[section] \newtheorem{lem}{Lemma}[section]
\theoremstyle{definition} \newtheorem{defin}{Definition}[section]
\newtheorem{ex}{Example}[section]
\theoremstyle{remark} 
\newcommand{\pn}{\par\noindent} \newcommand{\pmn}{\par\medskip\noindent}
\begin{document}
\title{On enumeration of tree-rooted planar cubic maps}
\author{Yury Kochetkov}
\date{}
\begin{abstract} We consider planar cubic maps, i.e. connected cubic graphs
embedded into plane, with marked spanning tree and marked directed edge (not in
this tree). The number of such objects with $2n$ vertices is $C_{2n}\cdot
C_{n+1}$, where $C_k$ is Catalan number.\end{abstract}
\email{yukochetkov@hse.ru, yuyukochetkov@gmail.com} \maketitle

\section{Introduction}
\pn Plane triangulation is a planar map, where the perimeter of
each face is three. The corresponding dual graph is \emph{cubic},
i.e. the degree of each vertex is three. A plane triangulation
will be called \emph{proper}, if each edge is incident to exactly
\emph{two} faces. Otherwise it will be called \emph{improper}.

\begin{ex}
\[\begin{picture}(265,70) \put(15,5){\small proper triangulation}
\put(165,5){\small improper triangulation} \put(25,20){\line(1,0){60}}
\put(25,20){\circle*{2}} \put(85,20){\circle*{2}} \put(55,65){\circle*{2}}
\put(25,20){\line(2,3){30}} \put(85,20){\line(-2,3){30}}
\put(215,45){\oval(40,40)} \put(180,45){\circle*{2}} \put(195,45){\circle*{2}}
\put(210,45){\circle*{2}} \put(180,45){\line(1,0){30}}
\end{picture}\] The corresponding dual graphs are presented below:
\[\begin{picture}(160,50) \put(20,25){\oval(40,40)} \put(20,5){\circle*{2}}
\put(20,45){\circle*{2}} \put(20,5){\line(0,1){40}} \put(60,22){¨}
\put(100,25){\oval(30,30)} \put(160,25){\oval(30,30)} \put(115,25){\circle*{2}}
\put(145,25){\circle*{2}} \put(115,25){\line(1,0){30}}
\end{picture}\]
\end{ex} \pn A connected graph with marked directed edge will be called
\emph{edge-rooted}. Proper edge-rooted triangulations where
enumerated by Tutte in the work \cite{Tu}: the number $T_n$ of
proper planar triangulations with $2n$ faces and marked directed
edge is
$$T_n=\frac{2\,(4n-3)!}{n!\,(3n-1)!}.$$ \pmn A combinatorial proof
of Tutte formula see in \cite{PS} (see also \cite{AP}). \pmn Let
$F_n$ be the number of planar edge-rooted cubic graphs with $2n$
vertices, i.e. the number of planar edge-rooted triangulations
(proper and improper) with $2n$ faces. Let us define numbers
$f_n$, $n\geqslant -1$, in the following way:
\begin{itemize}
    \item $f_{-1}=1/2$;
    \item $f_0=2$;
    \item $f_n=(3n+2)F_n$, $n>0$.
\end{itemize} In \cite{GJ} a recurrent relation for numbers $f_n$ was
proposed:
$$f_n=\frac{4(3n+2)}{n+1}\sum_{\scriptsize \begin{array}{c} i\geqslant -1,\, j\geqslant
-1\\ i+j=n-2\end{array}} f(i)f(j).\eqno(1)$$
\begin{ex} From (1) it follows that $F_1=4$. Indeed, there are four ways to choose a
root edge in a planar cubic map with two vertices:
\[\begin{picture}(340,40) \put(15,20){\oval(30,30)}  \put(15,5){\vector(0,1){30}}
\put(70,20){\oval(30,30)} \put(120,20){\oval(30,30)}
\put(85,20){\vector(1,0){20}} \put(175,20){\oval(30,30)}
\put(225,20){\oval(30,30)} \put(190,20){\line(1,0){20}}
\put(210,15){\vector(0,1){5}} \put(275,20){\oval(30,30)}
\put(290,20){\line(1,0){20}} \put(325,20){\oval(30,30)}
\put(310,25){\vector(0,-1){5}} \end{picture}\] Also we have that $F_2=32$.
Indeed, there are six cubic maps with 4 vertices (and 6 edges):
\[\begin{picture}(300,50) \put(40,25){\oval(40,40)} \put(40,5){\line(0,1){40}}
\put(60,25){\line(1,0){20}} \put(95,25){\oval(30,30)} \put(5,20){\small 1)}

\put(155,25){\oval(30,30)} \put(215,25){\oval(30,30)}
\qbezier(155,40)(185,55)(215,40) \qbezier(155,10)(185,-5)(215,10)
\put(125,20){\small 2)}

\put(280,25){\oval(40,40)} \put(280,25){\line(0,1){20}} \put(245,20){\small 3)}
\put(280,25){\line(3,-2){17}} \put(280,25){\line(-3,-2){17}}  \end{picture}\]
\[\begin{picture}(310,50) \put(20,35){\circle{10}} \put(20,5){\circle{10}}
\put(40,20){\line(-4,3){16}} \put(40,20){\line(-4,-3){16}}
\put(40,20){\line(1,0){20}} \put(65,20){\circle{10}} \put(0,17){\small 4)}

\put(110,20){\oval(20,20)} \put(120,20){\line(1,0){20}}
\put(150,20){\oval(20,20)} \put(160,20){\line(1,0){20}}
\put(190,20){\oval(20,20)} \put(85,17){\small 5)}

\put(240,20){\oval(20,20)} \put(250,20){\line(1,0){20}}
\put(290,20){\oval(40,40)} \put(290,20){\oval(20,20)}
\put(300,20){\line(1,0){10}} \put(215,17){\small 6)}
\end{picture}\]
\begin{center}{Figure 1}\end{center}\pmn Group of automorphisms of the first
map is trivial, of the second has order 4, of the third has order
12, of the forth has order 3, of the fifth and the sixth has order
2. Thus, there are 12 ways to choose a root edge in the first map,
3 --- in the second, 1 --- in the third, 4
--- in the forth, 6 --- in the fifth and the sixth. All this gives
us 32 edge-rooted maps. \end{ex} \pn However, this formula does
not seem to have a geometrical/combinatorial explanation. \pmn In
\cite{Mu} a nice formula was proposed for the number tree-rooted
planar maps, i.e. edge-rooted planar maps with distinguished
spanning tree: the number of such maps with $n$ edges is $C_n\cdot
C_{n+1}$, where $C_k$ is $k$-th Catalan number. An elegant proof
of this formula see in \cite{Be}.

\begin{ex} There are four planar maps with two edges:
\[\begin{picture}(215,40) \multiput(0,25)(15,0){3}{\circle*{3}}
\put(0,25){\line(1,0){30}} \put(13,2){\small 1}

\put(70,25){\oval(20,20)} \put(80,25){\circle*{3}}
\put(95,25){\circle*{3}} \put(80,25){\line(1,0){15}}
\put(68,2){\small 2}

\put(125,25){\circle*{3}} \put(145,25){\circle*{3}}
\put(135,25){\oval(20,20)} \put(134,2){\small 3}

\put(195,25){\circle*{3}} \put(185,25){\oval(20,20)}
\put(205,25){\oval(20,20)} \put(194,2){\small 4}
\end{picture}\]
\begin{itemize}
    \item There is one way to choose a spanning tree in the first
    map and two ways to choose a directed edge.
    \item There is one way to choose a spanning tree in the second
    map and four ways to choose a directed edge.
    \item There is one way to choose a spanning tree in the third
    map and two ways to choose a directed edge.
    \item There is no spanning trees in the forth map and two ways
    to choose a directed edge.
\end{itemize} Thus we have $10=C_2\cdot C_3$ tree rooted planar
maps with two edges. \end{ex} \pn We will study tree-rooted cubic
maps with additional property: a root edge \emph{does not} belong
to the spanning tree. \pmn \textbf{Theorem.} \emph{The number of
such tree-rooted cubic maps with $2n$ vertices is $C_{2n}\cdot
C_{n+1}$, where $C_k$ is $k$-th Catalan number.}

\section{The main construction: from map to curve}

\begin{defin} By tree-rooted plane cubic map we will understand a
cubic graph imbedded into plane (sphere) with
\begin{itemize}
    \item marked spanning tree;
    \item marked directed edge that \emph{does not} belong to the
    spanning tree.
\end{itemize}
\end{defin}
\pn Let $G$ be a tree-rooted pane cubic map with $2n$ vertices. We
draw triangles, one triangle for each vertex, in such way that:
\begin{itemize}
    \item triangles are disjoint;
    \item each vertex is inside the corresponding triangle;
    \item each side of triangle intersect one outgoing edge of
    corresponding vertex.
\end{itemize}
\begin{ex}
\[\begin{picture}(260,60) \qbezier[25](0,30)(0,50)(20,50)
\qbezier[25](0,30)(0,10)(20,10) \qbezier[25](20,10)(40,10)(40,30)
\qbezier[20](60,30)(60,45)(75,45)
\qbezier[20](60,30)(60,15)(75,15)
\qbezier[20](75,45)(90,45)(90,30)
\qbezier[20](75,15)(90,15)(90,30) \linethickness{0.5mm}
\put(20,10){\line(0,1){40}} \put(40,30){\line(1,0){20}}
\qbezier(20,50)(40,50)(40,30) \put(20,7){$\to$}
\put(110,28){$\Rightarrow$} \thinlines
\qbezier[25](140,30)(140,50)(160,50)
\qbezier[25](140,30)(140,10)(160,10)
\qbezier[25](160,10)(180,10)(180,30)
\qbezier[20](200,30)(200,45)(215,45)
\qbezier[20](200,30)(200,15)(215,15)
\qbezier[20](215,45)(230,45)(230,30)
\qbezier[20](215,15)(230,15)(230,30) \linethickness{0.5mm}
\put(160,10){\line(0,1){40}} \put(180,30){\line(1,0){20}}
\qbezier(160,50)(180,50)(180,30) \thinlines
\put(152,42){\line(1,0){16}} \put(152,42){\line(1,2){8}}
\put(168,42){\line(-1,2){8}} \put(152,18){\line(1,0){16}}
\put(152,18){\line(1,-2){8}} \put(168,18){\line(-1,-2){8}}
\put(188,22){\line(0,1){16}} \put(188,22){\line(-2,1){16}}
\put(188,38){\line(-2,-1){16}} \put(192,22){\line(0,1){16}}
\put(192,22){\line(2,1){16}} \put(192,38){\line(2,-1){16}}
\put(160,7){$\longrightarrow$}\end{picture}\] Thick lines above
mark spanning tree and an arrow indicates the direction of the
root edge. \end{ex} \pmn Two triangles will be called adjacent, if
the corresponding vertices are adjacent and the edge, that
connects them, belongs to the spanning tree.  The sides of
adjacent triangles that intersect this edge also will be called
adjacent. We construct a polygon $P$ by glewing adjacent triangles
by adjacent sides. This polygon has $2n+2$ sides and is divided
into $2n$ triangles. Each edge of the cubic map, that does not
belong to the spanning tree, intersects two sides of $P$ and we
will say that these sides constitute a pair. Polygon $P$ has a
marked side: the marked edge of the cubic map intersects it in
direction from inside $P$ to outside. \pmn Continuation of
Example.
\[\begin{picture}(160,200) \put(20,70){\line(2,-3){40}}
\put(20,70){\line(1,0){80}} \put(20,70){\line(2,3){80}}
\put(60,10){\line(2,3){80}} \put(60,130){\line(2,-3){40}}
\put(60,130){\line(1,0){80}} \put(100,190){\line(2,-3){40}}
\linethickness{0.6mm} \put(60,50){\line(0,1){40}}
\qbezier(60,90)(80,100)(100,110) \put(100,110){\line(0,1){40}}
\thinlines \put(60,50){\line(-2,-1){40}}
\put(60,50){\vector(2,-1){40}} \put(60,90){\line(-2,1){40}}
\put(100,110){\line(2,-1){40}} \put(100,150){\line(2,1){40}}
\put(100,150){\line(-2,1){40}} \put(65,5){\small A}
\put(105,65){\small B} \put(145,125){\small C} \put(10,68){\small
F} \put(50,128){\small E} \put(90,188){\small D}
\end{picture}\] Here $EF$ and $FA$, $AB$ and $BC$, $CD$ and $DE$
are pairs and $AB$ is the marked edge. If we identify sides that
are in pairs (i.e. $EF$ with $FA$, $AB$ with $BC$ and $CD$ with
$DE$), then we will obtain a triangulated genus 0 curve.

\section{The main construction: from curve to map}
\pn Let $P$ be a $2n$-gon with marked side $M$ and triangulated by
non-intersecting diagonals into $2n-2$ triangles. Sides of $P$ are
divided into pairs in such way, that the identification of sides
in each pair gives us a genus 0 curve. We will construct a plane
tree-rooted cubic map with root edge (not in the spanning tree) in
the following way.
\begin{itemize}
    \item We put a vertex $v_i$ inside each triangle $\triangle_i$
    and connect vertices in adjacent triangles --- the
    spanning tree is constructed.
    \item Let sides $L$ and $L'$ be in pair. $L$ and $L'$ are sides
    of triangles $\triangle_i$ and $\triangle_j$, respectively
    (these triangles may coincide). We draw an arc that connect
    $v_i$ and $v_j$ in the following way: going from $v_i$ the arc
    intersects $L$. Its next part lies in the exterior of $P$ and connects
    $L$ and $L'$. After intersecting $L'$ the arc goes to $v_j$.
    \item An arc, that intersects $M$ will be the root edge. At
    intersection point it is directed from inside $P$ to outside.
\end{itemize}
\begin{ex}
\[\begin{picture}(320,120) \put(10,40){\line(0,1){40}}
\put(10,40){\line(1,-1){30}} \put(10,40){\line(1,1){70}}
\put(10,80){\line(1,1){30}} \qbezier(10,40)(25,75)(40,110)
\put(40,110){\line(1,0){40}} \put(40,10){\vector(1,0){40}}
\put(40,10){\line(2,5){40}} \qbezier(40,10)(75,25)(110,40)
\put(80,110){\line(1,-1){30}} \qbezier(80,110)(95,75)(110,40)
\put(110,40){\line(0,1){40}} \put(80,10){\line(1,1){30}}
\put(0,36){\small H} \put(0,76){\small G} \put(36,0){\small A}
\put(36,113){\small F} \put(79,0){\small B} \put(79,113){\small E}
\put(115,36){\small C} \put(115,76){\small D}
\put(150,55){$\Rightarrow$}

\put(200,40){\line(0,1){40}} \put(200,40){\line(1,-1){30}}
\put(200,40){\line(1,1){70}} \put(200,80){\line(1,1){30}}
\qbezier(200,40)(215,75)(230,110) \put(230,110){\line(1,0){40}}
\put(230,10){\vector(1,0){40}} \put(230,10){\line(2,5){40}}
\qbezier(230,10)(265,25)(300,40) \put(270,110){\line(1,-1){30}}
\qbezier(270,110)(285,75)(300,40) \put(300,40){\line(0,1){40}}
\put(270,10){\line(1,1){30}} \put(190,36){\small H}
\put(190,76){\small G} \put(226,0){\small A} \put(226,113){\small
F} \put(269,0){\small B} \put(269,113){\small E}
\put(305,36){\small C} \put(305,76){\small D}

\linethickness{0.5mm} \put(210,80){\line(1,0){20}}
\qbezier(230,80)(235,65)(240,50) \put(240,50){\line(1,0){30}}
\put(270,50){\line(0,-1){30}} \qbezier(270,50)(280,65)(290,80)

\thinlines \put(270,20){\vector(-3,-2){30}}
\put(270,20){\line(2,-1){20}} \put(290,80){\line(2,-3){20}}
\put(290,80){\line(0,1){20}} \put(240,50){\line(-1,-1){35}}
\put(230,80){\line(2,3){25}} \put(210,80){\line(-2,-3){20}}
\put(210,80){\line(0,1){25}}
\end{picture}\]  Here sides $AB$ and $FG$, $BC$ and $CD$, $DE$ and
$EF$, $GH$ and $HA$ constitute pairs and $AB$ is the marked side.
Thus, we must connect the arc that intersects $AB$ with the arc
that intersects $FG$, the arc that intersects $BC$ with the arc
that intersects $CD$, the arc that intersects $DE$ with the arc
that intersects $EF$ and the arc that intersects $GH$ with the arc
that intersects $HA$. An arrow in the arc that intersects $AB$
indicates the direction of the root edge of the cubic graph. The
cubic graph itself and its "simplification" are presented in the
figure below.
\[\begin{picture}(300,100) \linethickness{0.6mm} \put(0,30){\line(1,0){80}}
\put(60,30){\line(0,1){10}} \thinlines \put(20,30){\line(0,1){10}}
\put(20,30){\oval(40,40)[b]} \put(50,30){\oval(100,100)[t]}
\put(40,40){\oval(40,40)[t]} \put(60,30){\oval(40,20)[tr]}
\put(90,30){\oval(20,20)[b]} \put(78,23){$\downarrow$}
\put(140,50){$\Rightarrow$} \linethickness{0.6mm}
\put(200,20){\line(0,1){60}} \put(200,20){\line(1,0){80}}
\put(280,20){\line(0,1){30}} \qbezier(280,20)(310,50)(280,80)
\thinlines \qbezier(200,20)(170,50)(200,80)
\put(200,80){\line(1,0){80}} \put(200,50){\line(1,0){80}}
\put(280,50){\line(0,1){30}} \put(280,80){\vector(-1,0){15}}
\end{picture}\]
\end{ex}
\begin{lem} We can draw above mentioned arcs in such way, that
they do not intersect in the exterior of $P$.\end{lem}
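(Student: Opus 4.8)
The plan is to pass to the sphere $S^2$ and reduce the lemma to a purely combinatorial property of the pairing of the sides of $P$. On the sphere the exterior of $P$ is an open disk $D$ whose boundary circle carries the $2n$ sides of $P$ in their cyclic order, and every arc we must draw lies in $D$ and joins the two sides of one pair. View the pairing as a chord diagram on the cyclically ordered sides. If this chord diagram is \emph{non-crossing} --- no two pairs $\{L_1,L_1'\}$, $\{L_2,L_2'\}$ interleave around $\partial P$ --- then the arcs are trivially realizable without crossings: pick one interior point on each side and join paired points by chords of $D$; since the cyclic order of the chosen points is the cyclic order of the sides, these chords are pairwise disjoint. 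Conversely two interleaving pairs obviously force their arcs to cross. So the lemma is equivalent to the assertion that the genus-$0$ hypothesis makes the pairing non-crossing, and this is the only thing I would actually prove.

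To prove it I would argue by contradiction and produce a handle. Suppose two pairs interleave, say the sides occur in the cyclic order $a,b,a',b'$ with $\{a,a'\}$ and $\{b,b'\}$ pairs. Inside the closed disk $\overline P$ choose an embedded arc $\alpha$ from a point $p$ in the interior of side $a$ to the point $p'$ of side $a'$ that is glued to $p$, and an embedded arc $\beta$ from an interior point of $b$ to its glued partner on $b'$, each meeting $\partial P$ only at its two endpoints. Because $a$ and $a'$ separate $b$ from $b'$ on $\partial P$, any such $\beta$ must meet $\alpha$, and we may choose $\alpha$ and $\beta$ so that they meet in a single point, transversally, in the interior of $P$, with their four endpoints distinct. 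After carrying out all the identifications, $\alpha$ and $\beta$ become \emph{simple closed} curves $\overline\alpha,\overline\beta$ in the glued surface $S$: their interiors lie in the interior of $P$, which the gluing does not touch, and each of the two endpoint pairs is identified to one point. Those two points are distinct, since they lie on the distinct side-classes of $\{a,a'\}$ and $\{b,b'\}$, so $\overline\alpha$ and $\overline\beta$ still meet in exactly one point, transversally.

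Now comes the contradiction. By hypothesis $S$ is a sphere, so by the Jordan curve theorem $\overline\alpha$ splits $S$ into two open disks $U_1$ and $U_2$. Deleting from $\overline\beta$ its single intersection point with $\overline\alpha$ leaves a connected arc contained in $S\setminus\overline\alpha=U_1\sqcup U_2$, hence contained in just one of $U_1,U_2$; but transversality of the crossing forces the two ends of this arc to approach the intersection point from opposite sides of $\overline\alpha$, i.e.\ one from $U_1$ and one from $U_2$ --- impossible. Hence no two pairs interleave, the chord diagram is non-crossing, and the arcs can be drawn in the exterior of $P$ without intersections. The routine part is the realizability statement in the first paragraph; the substance, and the only place the genus-$0$ hypothesis enters, is the observation that a crossing pair of sides yields two transverse simple closed curves meeting exactly once, which a sphere cannot contain.
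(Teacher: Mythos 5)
Your proof is correct and follows essentially the same route as the paper: reduce the lemma to the claim that the side-pairing, viewed as a chord diagram on the boundary of $P$, is non-crossing, and then use the sphere to exchange the interior and exterior of $P$. The only difference is that the paper merely asserts that the genus-$0$ hypothesis forces the chords not to cross, whereas you actually prove this implication (two interleaved pairs would yield two simple closed curves on the glued surface meeting transversally in exactly one point, which is impossible on a sphere) --- a worthwhile piece of detail, since that assertion carries the whole content of the lemma.
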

\begin{proof} Let us connect midpoints of all sides in pairs by
segments inside $P$. As the identification of sides in pairs
generates a genus zero curve, then these segments do not
intersect. The polygon $P$ is embedded into sphere, so we can
interchange its interior and exterior domains. \end{proof}

\section{Main statement}
\pn
\begin{theor} The number of tree-rooted cubic maps with
$2n$ vertices and a marked edge, that does not belong to the
spanning tree, is $C_{2n}\cdot C_{n+1}$, where $C_k$ is $k$-th
Catalan number. \end{theor}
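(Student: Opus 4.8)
The plan is to promote the two reciprocal constructions of Sections 2 and 3 into a pair of mutually inverse bijections and then to count the combinatorial objects on the ``curve'' side. Write $\mathcal M_n$ for the set of tree-rooted cubic maps with $2n$ vertices whose directed root does not lie in the spanning tree, and $\mathcal C_n$ for the set of data consisting of a $(2n+2)$-gon $P$ with one distinguished side $M$, a triangulation of $P$ into $2n$ triangles by $2n-1$ non-crossing diagonals, and a partition of the $2n+2$ sides of $P$ into $n+1$ pairs whose identification yields a genus $0$ surface. Section 2 produces a map $\Phi\colon\mathcal M_n\to\mathcal C_n$ and Section 3 a map $\Psi\colon\mathcal C_n\to\mathcal M_n$; it will suffice to verify that $\Phi$ and $\Psi$ are well defined, that $\Psi\circ\Phi$ and $\Phi\circ\Psi$ are the identity maps, and finally to compute $|\mathcal C_n|$.

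For well-definedness of $\Phi$ I would use that, the marked spanning tree being a tree, gluing the $2n$ triangles along the $2n-1$ sides that cross tree edges produces a disk: exactly $2(2n-1)$ of the $6n$ triangle sides become interior, so the boundary has $2n+2$ sides and $P$ is a triangulated $(2n+2)$-gon as required. The $n+1$ non-tree edges then pair the boundary sides, and re-identifying paired sides rebuilds a regular neighbourhood of $G$ inside the sphere --- $P$ is a regular neighbourhood of the spanning tree and each non-tree edge contributes its portion lying in the complementary disk $S^2\setminus\mathrm{int}\,P$ --- so the reglued surface is again $S^2$ and the pairing is a genus $0$ pairing; the distinguished side is the one the directed root crosses from inside $P$ to outside. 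For $\Psi$, Lemma 3.1 supplies the disjoint exterior arcs; each triangle $\triangle_i$ has three sides, each used once, by a tree edge or by an arc, so $v_i$ has degree $3$; the tree edges form a spanning tree because the dual graph of a polygon triangulation is a tree on $2n$ vertices; and the resulting connected graph, drawn in the disk $P$ together with disjoint arcs in the complementary disk, is embedded in $S^2$, with root the arc through $M$, which is not a tree edge. That $\Psi\circ\Phi$ and $\Phi\circ\Psi$ are the identity should be essentially formal: the triangles $\triangle_i$ are recovered, up to isotopy, as small disks around the $v_i$ meeting each incident edge once, and this recovers $P$, its triangulation, the side-pairing and the marked side; conversely for the embedded map.

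It then remains to count $\mathcal C_n$. Marking the side $M$ rigidifies the polygon --- its rotation group becomes trivial and, $S^2$ being oriented, there is no reflection --- so the triangulation and the side-pairing are independent choices. Triangulations of a $(2n+2)$-gon number $C_{2n}$. A pairing of the $2n+2$ sides gives a genus $0$ surface precisely when the chords joining midpoints of paired sides are pairwise non-crossing: after gluing there are $2n$ triangles and $3n$ edges, so the number $V$ of identified corners satisfies $V-3n+2n=2-2g$, whence $g=0$ forces $V=n+2$, which holds iff the chord diagram is planar; and non-crossing perfect matchings of $2n+2$ points number $C_{n+1}$. Hence $|\mathcal C_n|=C_{2n}\cdot C_{n+1}$, and since $|\mathcal M_n|=|\mathcal C_n|$ the theorem follows.

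The step I expect to be the genuine obstacle is the geometric core of the bijection: checking carefully that the genus $0$ condition on the side-pairing is equivalent to planarity of the cubic map --- equivalently, that regluing $P$ along the pairing returns precisely the sphere in which $G$ is embedded --- and that $\Phi$ and $\Psi$ invert each other at the level of embedded maps rather than merely of combinatorial data. The Catalan count is then immediate.
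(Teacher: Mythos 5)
Your proposal is correct and follows essentially the same route as the paper: the bijection of Sections 2--3 between tree-rooted cubic maps and marked-side $(2n+2)$-gons equipped with a triangulation ($C_{2n}$ choices) and a genus-$0$ side-pairing ($C_{n+1}$ choices, i.e. non-crossing matchings). You merely spell out the well-definedness, mutual inversion, and Euler-characteristic details that the paper leaves implicit.
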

\begin{proof} Our theorem follows from two statements.
\begin{enumerate}
    \item A convex $n$-gon with a marked side can be divided into
    triangles by non-intersecting diagonals in $C_{n-2}$ ways \cite{St}.
    \item There are $C_n$ ways to define a pairwise identification
    of sides of a convex $2n$-gon with a marked side to obtain
    a genus 0 curve \cite{LZ}.
\end{enumerate}
\end{proof}

\begin{ex} According to theorem, we have $C_4\cdot C_3=70$ tree-rooted
cubic maps with $4$ vertices. In what follows a map with a marked
spanning tree will be called \emph{t-map}. The first cubic map in
Figure 1 generates six t-maps.
\[\begin{picture}(330,50) \qbezier[20](0,25)(0,45)(20,45)
\qbezier[20](0,25)(0,5)(20,5) \qbezier[20](60,25)(60,40)(75,40)
\qbezier[20](60,25)(60,10)(75,10)
\qbezier[20](75,40)(90,40)(90,25)
\qbezier[20](75,10)(90,10)(90,25) \qbezier[30](20,5)(20,25)(20,45)

\qbezier[20](180,25)(180,40)(195,40)
\qbezier[20](180,25)(180,10)(195,10)
\qbezier[20](195,40)(210,40)(210,25)
\qbezier[20](195,10)(210,10)(210,25)
\qbezier[20](120,25)(120,45)(140,45)
\qbezier[20](120,25)(120,5)(140,5)
\qbezier[20](140,5)(160,5)(160,25)

\qbezier[20](300,25)(300,40)(315,40)
\qbezier[20](300,25)(300,10)(315,10)
\qbezier[20](315,40)(330,40)(330,25)
\qbezier[20](315,10)(330,10)(330,25)
\qbezier[20](260,5)(280,5)(280,25)
\qbezier[30](260,5)(260,25)(260,45)

\linethickness{0.5mm} \qbezier(20,45)(40,45)(40,25)
\qbezier(20,5)(40,5)(40,25) \put(40,25){\line(1,0){20}}
\put(160,25){\line(1,0){20}} \put(280,25){\line(1,0){20}}
\qbezier(140,45)(160,45)(160,25) \put(140,5){\line(0,1){40}}
\qbezier(240,25)(240,45)(260,45) \qbezier(240,25)(240,5)(260,5)
\qbezier(260,45)(280,45)(280,25)
\end{picture}\]

\[\begin{picture}(210,50) \qbezier[20](0,25)(0,45)(20,45)
\qbezier[20](0,25)(0,5)(20,5) \qbezier[20](60,25)(60,40)(75,40)
\qbezier[20](60,25)(60,10)(75,10)
\qbezier[20](75,40)(90,40)(90,25)
\qbezier[20](75,10)(90,10)(90,25)
\qbezier[20](20,45)(40,45)(40,25)

\qbezier[20](180,25)(180,40)(195,40)
\qbezier[20](180,25)(180,10)(195,10)
\qbezier[20](195,40)(210,40)(210,25)
\qbezier[20](195,10)(210,10)(210,25)
\qbezier[20](140,45)(160,45)(160,25) \put(140,5){\line(0,1){40}}

\qbezier[20](140,5)(160,5)(160,25)
\qbezier[30](140,5)(140,25)(140,45)

\linethickness{0.5mm} \put(20,5){\line(0,1){40}}
\put(40,25){\line(1,0){20}} \qbezier(20,5)(40,5)(40,25)
\qbezier(120,25)(120,45)(140,45) \qbezier(120,25)(120,5)(140,5)
\put(160,25){\line(1,0){20}} \qbezier(140,5)(160,5)(160,25)
\end{picture}\] In each case we have six ways to choose a marked
edge, that does not belong to the tree. Thus, the first map
generates $30$ tree-rooted maps. \pmn The second cubic map in
Figure 1 generates four t-maps.
\[\begin{picture}(310,80) \put(0,37){\small 1)} \qbezier[20](40,60)(60,60)(60,40)
\qbezier[20](40,20)(60,20)(60,40)
\qbezier[20](120,60)(100,60)(100,40)
\qbezier[20](120,20)(100,20)(100,40)
\qbezier[20](120,60)(140,60)(140,40)
\qbezier[20](120,20)(140,20)(140,40) \put(40,20){\circle*{3}}
\put(40,60){\circle*{3}} \put(120,20){\circle*{3}}
\put(120,60){\circle*{3}}

\put(170,37){\small 2)} \qbezier[20](210,60)(230,60)(230,40)
\qbezier[20](210,20)(230,20)(230,40)
\qbezier[20](290,60)(270,60)(270,40)
\qbezier[20](290,20)(270,20)(270,40) \put(210,20){\circle*{3}}
\put(210,60){\circle*{3}} \put(290,20){\circle*{3}}
\put(290,60){\circle*{3}} \qbezier[60](210,60)(250,90)(290,60)

\linethickness{0.5mm} \qbezier(40,60)(20,60)(20,40)
\qbezier(40,20)(20,20)(20,40) \qbezier(40,20)(80,-10)(120,20)
\qbezier(40,60)(80,90)(120,60) \qbezier(210,60)(190,60)(190,40)
\qbezier(210,20)(190,20)(190,40) \qbezier(210,20)(250,-10)(290,20)

\qbezier(290,60)(310,60)(310,40) \qbezier(290,20)(310,20)(310,40)
\end{picture}\]

\[\begin{picture}(310,80) \put(0,37){\small 3)} \qbezier[20](40,60)(60,60)(60,40)
\qbezier[20](40,20)(60,20)(60,40)
\qbezier[20](120,60)(140,60)(140,40)
\qbezier[20](120,20)(140,20)(140,40)  \put(40,20){\circle*{3}}
\put(40,60){\circle*{3}} \put(120,20){\circle*{3}}
\put(120,60){\circle*{3}} \qbezier[60](40,60)(80,90)(120,60)

\put(170,37){\small 4)} \qbezier[20](210,60)(190,60)(190,40)
\qbezier[20](210,20)(190,20)(190,40)
\qbezier[20](290,60)(270,60)(270,40)
\qbezier[20](290,20)(270,20)(270,40) \put(210,20){\circle*{3}}
\put(210,60){\circle*{3}} \put(290,20){\circle*{3}}
\put(290,60){\circle*{3}} \qbezier[60](210,60)(250,90)(290,60)

\linethickness{0.5mm} \qbezier(40,60)(20,60)(20,40)
\qbezier(40,20)(20,20)(20,40) \qbezier(40,20)(80,-10)(120,20)
\qbezier(120,60)(100,60)(100,40) \qbezier(120,20)(100,20)(100,40)

\qbezier(210,60)(230,60)(230,40) \qbezier(210,20)(230,20)(230,40)
\qbezier(210,20)(250,-10)(290,20) \qbezier(290,60)(310,60)(310,40)
\qbezier(290,20)(310,20)(310,40)
\end{picture}\] The first two t-maps have trivial groups of
automorphisms. Thus, they generate six tree-rooted cubic maps
each. But the group of automorphisms of the third and the of forth
t-maps has order two. Thus, they generate three tree-rooted cubic
maps each and the second cubic map generates $18$ tree-rooted
maps. \pmn The third cubic map in Figure 1 generates three t-maps.
\[\begin{picture}(270,95) \put(0,30){\circle*{3}} \put(70,30){\circle*{3}}
\put(35,50){\circle*{3}} \put(35,90){\circle*{3}}
\qbezier[30](0,30)(35,5)(70,30) \qbezier[30](0,30)(0,70)(35,90)
\qbezier[30](70,30)(70,70)(35,90) \put(33,2){\small 1}

\put(100,30){\circle*{3}} \put(170,30){\circle*{3}}
\put(135,50){\circle*{3}} \put(135,90){\circle*{3}}
\qbezier[30](100,30)(100,70)(135,90)
\qbezier[30](135,50)(135,70)(135,90)
\qbezier[30](135,50)(152,40)(170,30) \put(133,2){\small 2}

\put(200,30){\circle*{3}} \put(270,30){\circle*{3}}
\put(235,50){\circle*{3}} \put(235,90){\circle*{3}}
\qbezier[30](270,30)(270,70)(235,90)
\qbezier[30](235,50)(235,70)(235,90)
\qbezier[30](200,30)(217,40)(235,50) \put(233,2){\small 3}

\linethickness{0.5mm} \qbezier(0,30)(17,40)(35,50)
\qbezier(35,50)(52,40)(70,30) \put(35,50){\line(0,1){40}}

\qbezier(100,30)(117,40)(135,50) \qbezier(100,30)(135,5)(170,30)
\qbezier(170,30)(170,70)(135,90)

\qbezier(200,30)(235,5)(270,30) \qbezier(200,30)(200,70)(235,90)
\qbezier(235,50)(252,40)(270,30)
\end{picture}\] The group of automorphisms of the first of them has
order $3$ and of the second and the third --- order $2$. Thus they
generate $2+3+3=8$ tree-rooted cubic maps. \pmn The forth cubic
map in Figure 1 generates one t-map with order three group of
automorphisms. Thus, it generates $2$ tree-rooted cubic maps. \pmn
The fifth cubic map in Figure 1 also generates one t-map with
trivial group of automorphisms. Thus, it generates $6$ tree-rooted
cubic maps. \pmn The sixth cubic map in Figure 1 generates two
t-maps
\[\begin{picture}(240,80) \put(10,40){\oval(20,20)}
\qbezier(40,40)(40,70)(70,70) \qbezier(70,70)(100,70)(100,40)
\put(70,40){\oval(20,20)}

\put(150,40){\oval(20,20)} \qbezier(180,40)(180,10)(210,10)
\qbezier(210,10)(240,10)(240,40) \put(210,40){\oval(20,20)}

\linethickness{0.6mm} \put(20,40){\line (1,0){20}}
\put(80,40){\line(1,0){20}} \qbezier(40,40)(40,10)(70,10)
\qbezier(70,10)(100,10)(100,40)

\put(160,40){\line (1,0){20}} \put(220,40){\line(1,0){20}}
\qbezier(180,40)(180,70)(210,70) \qbezier(210,70)(240,70)(240,40)
\end{picture}\] with order two group of
automorphisms each. Thus, they generate $3+3=6$ tree-rooted cubic
maps. \pmn So, we have $$30+18+8+2+6+6=70$$ tree-rooted cubic
maps, as expected.
\end{ex}

\vspace{5mm}
\end{document}